\newtheorem{thm}{Theorem}[section]
\newtheorem{cor}[thm]{Corollary}
\newtheorem{lem}[thm]{Lemma}
\theoremstyle{definition}
\newcommand{\scr}[1]{\mathscr #1}
\definecolor{wco}{rgb}{0.5,0.2,0.3}
\numberwithin{equation}{section} \theoremstyle{remark}
\newcommand{\ua}{\uparrow}
\title{{\bf Bismut Formulae and Applications for Functional SPDEs}\footnote{Supported in
 part by NNSFC(11131003), SRFDP and the Fundamental Research Funds for the Central Universities.}
}
\author{
{\bf  Jianhai Bao$^{b)}$,  Feng-Yu Wang$^{a),b)}$,  Chenggui Yuan$^{b)}$}\\
\footnotesize{$^{a)}$School of Mathematical Sciences, Beijing Normal
University, Beijing 100875, China}\\
 \footnotesize{$^{b)}$Department of Mathematics,
Swansea University, Singleton Park, SA2 8PP, UK}\\
\footnotesize{wangfy@bnu.edu.cn, F.-Y.Wang@swansea.ac.uk,
C.Yuan@swansea.ac.uk}}
\begin{document}
\def\R{\mathbb R}  \def\ff{\frac} \def\ss{\sqrt} \def\B{\mathbf
B}
\def\N{\mathbb N} \def\kk{\kappa} \def\m{{\bf m}}
\def\dd{\delta} \def\DD{\Delta} \def\vv{\varepsilon} \def\rr{\rho}
\def\<{\langle} \def\>{\rangle} \def\GG{\Gamma} \def\gg{\gamma}
  \def\nn{\nabla} \def\pp{\partial} \def\EE{\scr E}
\def\d{\text{\rm{d}}} \def\bb{\beta} \def\aa{\alpha} \def\D{\scr D}
  \def\si{\sigma} \def\ess{\text{\rm{ess}}}
\def\beg{\begin} \def\beq{\begin{equation}}  \def\F{\scr F}
\def\Ric{\text{\rm{Ric}}} \def\Hess{\text{\rm{Hess}}}
\def\e{\text{\rm{e}}} \def\ua{\underline a} \def\OO{\Omega}  \def\oo{\omega}
 \def\tt{\tilde} \def\Ric{\text{\rm{Ric}}}
\def\cut{\text{\rm{cut}}} \def\P{\mathbb P} \def\ifn{I_n(f^{\bigotimes n})}
\def\C{\scr C}      \def\aaa{\mathbf{r}}     \def\r{r}
\def\gap{\text{\rm{gap}}} \def\prr{\pi_{{\bf m},\varrho}}  \def\r{\mathbf r}
\def\Z{\mathbb Z} \def\vrr{\varrho} \def\ll{\lambda}
\def\L{\scr L}\def\Tt{\tt} \def\TT{\tt}\def\II{\mathbb I}
\def\i{{\rm in}}\def\Sect{{\rm Sect}}\def\E{\mathbb E} \def\H{\mathbb H}
\def\M{\scr M}\def\Q{\mathbb Q} \def\texto{\text{o}} \def\LL{\Lambda}
\def\Rank{{\rm Rank}} \def\B{\scr B} \def\i{{\rm i}} \def\HR{\hat{\R}^d}
\def\to{\rightarrow}\def\l{\ell}
\def\8{\infty}

\maketitle

\begin{abstract} By using Malliavin calculus, explicit derivative formulae are established for a class of
semi-linear functional stochastic partial differential equations
with additive or multiplicative noise. As applications, gradient
estimates and Harnack inequalities are derived for the semigroup of
the associated segment process.

\end{abstract}
\noindent
 AMS subject Classification:\  60H10, 47G20.   \\
\noindent
 Keywords: Bismut formula, Malliavin calculus, gradient estimate, Harnack inequality,  functional SPDE.
 \vskip 2cm

\section{Introduction}
The Bismut-type formulae, initiated in \cite{Bismut}, are  powerful tools to
derive regularity estimates for the underlying Markov semigroups.  The formulae have been developed and applied
in various settings,
e.g., in \cite{dz96} for stochastic partial differential equations
(SPDEs) driven by cylindrical Wiener processes and \cite{dx10} for
semi-linear SPDEs with L\'{e}vy noise, using a simple martingale
approach proposed by Elworthy-Li \cite{el94}; in \cite{W11b} for
linear stochastic differential equations (SDEs) driven by (purely jump) L\'{e}vy
processes in terms of lower bound conditions of   L\'{e}vy measures; in
\cite{bwy11,gw11} for degenerate SDEs with additive noise, using a
coupling technique;   in \cite{f96,p06,wz11,z10} for degenerate
SDEs    using Malliavin calculus.

 However, there are few analogues for functional SPDEs (even for finite-dimensional functional SDEs) with
multiplicative noise.  In this paper we aim to establish explicit Bismut-type formulae for a
class of functional SPDEs with additive or multiplicative noise. Noting that   for functional SDEs  the martingale method
used in \cite{el94} does not work due to the lack of     backward
Kolmogorov equation for the segment process, and  the coupling method
developed in \cite{ATW06,bwy11,gw11, wx10} seems not easy to apply provided the noise is multiplicative,
we will  mainly make use of   Malliavin calculus.

 Let $(H,\langle\cdot,\cdot\rangle,\|\cdot\|)$ be a real
separable Hilbert space,  and  $(W(t))_{t\geq0}$   a cylindrical
Wiener process on $H$ with respect to  a complete probability space
$(\Omega, \scr {F}, \mathbb{P})$ with the natural filtration
$\{\scr {F}_t\}_{t\geq0}$. Let $\scr {L}(H)$ and $\scr
{L}_{HS}(H)$ be the spaces of all linear bounded operators and
Hilbert-Schmidt operators on $H$ respectively. Denote by $\|\cdot\|$
and $\|\cdot\|_{HS}$  the operator norm and the Hilbert-Schmidt norm
respectively.  Let $\tau>0$ be fixed and let
$\mathscr{C}:=C([-\tau,0]\rightarrow H)$, the space of all
$H$-valued continuous functions   defined on $[-\tau,0]$, equipped with the
uniform norm
$\|f\|_\infty:=\sup_{-\tau\leq\theta\leq0}\|f(\theta)\|$. For a
map $h:[-\tau,\infty)\rightarrow H$ and $t\geq0$, let
$h_t\in\mathscr{C}$ be the segment of $h(t)$, i.e.
$h_t(\theta)=h(t+\theta),\theta\in[-\tau,0]$.

Consider the following semi-linear functional SPDE
\begin{equation}\label{eq1}
\begin{cases}
\d X(t)=\{AX(t)+F(X_t)\}\d t+\sigma(X(t))\d W(t),\\
X_0=\xi\in\mathscr{C},
\end{cases}
\end{equation}
 where
\begin{enumerate}
\item[\textmd{(A1)}] $(A,\scr {D}(A))$ is a linear
operator on $H$ generating a contractive $C_0$-semigroup
$(\e^{tA})_{t\geq0}.$
\item[\textmd{(A2)}]
$F:\mathscr{C}\rightarrow H$ is Fr\'{e}chet differentiable such that
$\nabla F:\C\times\C\to H$ is bounded on $\C\times\C$ and   uniformly continuous on bounded sets.
\item[\textmd{(A3)}] $ \sigma: H\rightarrow \scr {L}(H)$
is Fr\'{e}chet differentiable such that $\nabla \sigma: H\times H\to \L_{HS}(H)$ is bounded on $H\times H$ and  uniformly continuous on bounded sets, and $\si(x)$ is invertible for each $x\in H$.
\item[\textmd{(A4)}]   $\int_0^t s^{-2\alpha}\|e^{s A}\si(0)\|_{HS}^2\d s<\infty$ holds for some constant $\aa\in (0,\frac{1}{2})$ and all $t>0.$
\end{enumerate}

Recall that a mild solution is a continuous adapt process $(X(t))_{t\ge -\tau}$ on $H$ such that
$$X(t)= \e^{t A}\xi(0) + \int_0^t \e^{(t-s)A} F(X_s)\d s +\int_0^t\e^{(t-s)A}\si(X(s))\d W(s),\ \ t\ge 0.$$
 By $(A1)-(A4)$, equation \eqref{eq1} has a unique mild solution (see
Theorem  A.1 in the Appendix section), denoted by
$(X^\xi(t))_{t\geq0}$, the solution with
$X_0=\xi\in\mathscr{C}$. Let
\begin{equation*}
P_tf(\xi):=\mathbb{E}f(X_t^\xi), \ \ \ \  t\geq0,
 \xi\in\mathscr{C}, f\in\mathscr{B}_b(\mathscr{C}),
\end{equation*}
where $\mathscr{B}_b(\mathscr{C})$ is the class of all bounded
measurable functions on $\mathscr{C}$. We remark that due to the
time-delay the solution $(X^\xi(t))_{t\geq0}$ is not Markovian, but
its segment process $(X^\xi_t)_{t\geq0}$ admits strong Markov
property, so that $P_t$  is a Markov semigroup on
$\mathscr{B}_b(\mathscr{C})$.

The following two theorems are the main results of the paper, which
provide derivative formulae for $P_t$ with additive and
multiplicative noise respectively.

\begin{thm}[Additive Noise]\label{T1.1} Assume that $(A1)$-$(A4)$ hold with
constant $\sigma\in\scr {L}(H)$.
Then for any $T>\tau$ and $u\in C^1([0,\infty))$ such that $u(0)=1$
and $u(t)=0$ for $t\ge T-\tau$,
\begin{equation}\label{eq3}
\nabla_\eta
P_Tf(\xi)=\mathbb{E}\bigg(f(X_T^\xi)\int_0^{T}\big\langle
\sigma^{-1}(\nabla_{\Upsilon_t}F(X_t^\xi)-\dot{u}(t)\e^{tA}\eta(0)),\d
W(t)\big\rangle\bigg)
\end{equation}
holds for all $\xi,\eta\in\mathscr{C}$ and $f\in
C^1_b(\mathscr{C})$, where
\begin{equation*}
\Upsilon(t):=
\begin{cases}
u(t)\e^{tA}\eta(0), \ & t>0,\\
 \eta(t),\  &  t\in[-\tau,0].
\end{cases}
\end{equation*}
\end{thm}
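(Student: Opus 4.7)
The plan is to combine a directional derivative computation along $\eta$ with a Malliavin integration by parts along a carefully chosen Cameron--Martin shift $h$, arranged so that the two perturbations of $X^\xi$ cancel out at time $T$. Since $\si$ is constant, the directional derivative $v(t):=\nn_\eta X^\xi(t)$ satisfies the linear mild equation
$$v(t)=\e^{tA}\eta(0)+\int_0^t\e^{(t-s)A}\nn_{v_s}F(X_s^\xi)\,\d s,\qquad t\ge 0,$$
with segment initial condition $v_0=\eta$. A standard dominated-convergence argument based on boundedness of $\nn F$ then gives $\nn_\eta P_T f(\xi)=\E[\nn_{v_T}f(X_T^\xi)]$ for $f\in C_b^1(\scr C)$.

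Next, for an adapted $H$-valued process $\dot h\in L^2([0,T]\times\OO;H)$, additivity of the noise forces the Malliavin derivative $D_hX$ along $h(t)=\int_0^t\dot h(s)\,\d s$ to satisfy
$$(D_hX)(t)=\int_0^t\e^{(t-s)A}\nn_{(D_hX)_s}F(X_s^\xi)\,\d s+\int_0^t\e^{(t-s)A}\si\dot h(s)\,\d s,$$
with $(D_hX)_0=0$. Hence $w:=v+D_hX$ satisfies the same linear functional equation but with an extra forcing $\si\dot h$ and initial segment $\eta$. The auxiliary process $\Upsilon$ in the statement is designed so that $w\equiv\Upsilon$: differentiating $\Upsilon(t)=u(t)\e^{tA}\eta(0)$ on $(0,T)$ yields $\d\Upsilon(t)=A\Upsilon(t)\,\d t+\dot u(t)\e^{tA}\eta(0)\,\d t$ with initial segment $\eta$ (because $u(0)=1$), so comparison with the equation for $w$ dictates the choice
$$\dot h(t)=\si^{-1}\bigl(\dot u(t)\e^{tA}\eta(0)-\nn_{\Upsilon_t}F(X_t^\xi)\bigr),$$
and a direct substitution into the mild form confirms the identity $w\equiv\Upsilon$ on $[-\tau,T]$.

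Because $u\equiv 0$ on $[T-\tau,T]$, it follows that $\Upsilon_T=0$ and hence $v_T=-D_hX_T$. The Malliavin integration by parts formula for the adapted shift $h$ therefore yields
$$\nn_\eta P_T f(\xi)=\E[\nn_{v_T}f(X_T^\xi)]=-\E[D_hf(X_T^\xi)]=-\E\Bigl[f(X_T^\xi)\int_0^T\langle\dot h(t),\d W(t)\rangle\Bigr],$$
and substituting the formula for $\dot h$ gives \eqref{eq3}.

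The principal technical steps I expect to require care are: (i) establishing Malliavin differentiability of the mild solution $X^\xi$ with the claimed linear mild equation for $D_hX$, which I would do by Picard iteration using (A2)--(A3) together with uniform boundedness of $\nn F$; (ii) verifying that $\dot h\in L^2([0,T]\times\OO;H)$, which is automatic from boundedness of $\si^{-1}$, $\nn F$, and $\dot u$ combined with contractivity of $(\e^{tA})_{t\ge 0}$; and (iii) passing from smooth cylindrical test functions to general $f\in C_b^1(\scr C)$ by a density argument. Note that assumption (A4) is not used in the additive case; it is needed only for the multiplicative analogue.
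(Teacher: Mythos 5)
Your argument is correct and follows essentially the same route as the paper: you identify the deterministic process $\Upsilon$ with the combination of the derivative process $\nabla_\eta X^\xi$ and the Malliavin derivative $D_hX^\xi$ (the paper uses the difference $\nabla_\eta X^\xi-D_hX^\xi$, you use the sum with $\dot h$ equal to the negative of the paper's choice, which the final minus sign absorbs), then use $\Upsilon_T=0$ and integration by parts. One minor correction: (A4) is in fact also needed in the additive case, since it is what makes the stochastic convolution $\int_0^t\e^{(t-s)A}\sigma\,\d W(s)$ a well-defined $H$-valued continuous process (Theorem A.1), though this does not affect the derivative-formula argument itself.
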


\begin{thm}[Multiplicative Noise]\label{T1.2}
  Assume that $(A1)$-$(A4)$ hold. Let $T>\tau$ and $u\in
C^1([0,\infty))$ be such that $u(t)>0$ for $t\in[0,T-\tau)$,
$u(t)=0$ for $t\ge T-\tau,$ and  $$\theta_p:=\inf_{t\in[0,T-\tau]}\big\{p+(p-1)u'(t)\big\}>0$$ holds for some $p>1$.  Then
for any $\xi,\eta\in\mathscr{C}$:
\begin{enumerate}
\item[$(1)$] The equation
\begin{equation}\label{eq4}
\begin{cases}
\d Z(t)=\Big\{AZ(t)+\big(\nabla_{Z_t}F(X_t^\xi)-\frac{Z(t)}{u(t)}\big)1_{[0,T-\tau)}(t)\Big\}\d t\\
\ \ \ \ \ \ \ \ \ \ \ \ \  +(\nabla_{Z(t)}\sigma(X^\xi(t)))\d W(t),\\
 Z_0=\eta,
\end{cases}
\end{equation}
has a unique solution such that $Z(t)=0$ for $t\geq T-\tau$.
\item[$(2)$] If    $\|\si^{-1}(\cdot)\|\le c(1+\|\cdot\|^q)$ holds for some constants $c,q>0$, then
\begin{equation}\label{eq5}
\begin{split}
\nabla_\eta
P_Tf(\xi)&=\mathbb{E}\bigg(f(X_T^\xi)\int_0^{T}\Big\langle
\sigma^{-1}(X^\xi(t))\Big\{\frac{Z(t)}{u(t)}1_{[0,T-\tau)}(t)\\
&\ \ \ \ \ \ \ \ +\nabla_{Z_t}F(X_t^\xi)1_{[T-\tau,T]}(t)\Big\},\d
W(t)\Big\rangle\bigg)
\end{split}
\end{equation}
holds for $f\in C^1_b(\mathscr{C})$.
\end{enumerate}
\end{thm}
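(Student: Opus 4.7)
The plan is to combine a careful analysis of the singular auxiliary equation \eqref{eq4} with a Malliavin-calculus integration by parts. Part (1) produces a process $Z$ that mediates between the directional derivative of $X^\xi$ and its Malliavin derivative along a cleverly chosen direction $h$, and part (2) reads off \eqref{eq5} from the Bismut integration by parts formula.

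\emph{Part (1).} Fix $\vv\in(0,T-\tau)$. On $[0,T-\tau-\vv]$ the drift coefficient $\nn_{Z_t}F(X_t^\xi)-Z(t)/u(t)$ is Lipschitz in $Z$ uniformly in $t$ since $1/u$ is bounded there, so a standard contraction argument for mild solutions produces a unique continuous solution on this interval. To let $\vv\da 0$, I would derive an a priori bound on $\E[\|Z(t)\|^p/u(t)^{p-1}]$ by applying It\^o's formula to $\|Z\|^p/u^{p-1}$ (for a Yosida-approximated version of the mild solution). The crucial cancellation is that the singular contribution $-p\|Z\|^p/u$ in $\d\|Z\|^p$ combines with $-(p-1)u'\|Z\|^p/u^p$ coming from $\d(u^{-(p-1)})$ to yield the coefficient $-(p+(p-1)u')/u^p\le -\theta_p/u^p\le 0$; this is precisely where $\theta_p>0$ enters. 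All remaining drift and diffusion contributions are controlled by the boundedness of $\nn F$ and $\nn\si$, and a functional Gronwall inequality gives
$$\E\|Z(t)\|^p\le C\,u(t)^{p-1},\qquad t\in[0,T-\tau).$$
This bound allows the right-hand side of the mild formulation of \eqref{eq4} to be defined on all of $[0,T]$ once one declares $Z\equiv 0$ on $[T-\tau,T]$ (the indicators eliminate the singular terms there), and it forces $Z(t)\to 0$ as $t\ua T-\tau$, producing a continuous mild solution on $[0,T]$; uniqueness is inherited from the interval-by-interval construction.

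\emph{Part (2).} Let $J^\eta(t):=\nn_\eta X^\xi(t)$ denote the Fr\'echet derivative of $X^\xi(t)$ in the direction $\eta$, with $J^\eta(t)=\eta(t)$ for $t\in[-\tau,0]$. Differentiating the mild equation for $X^\xi$ shows that $J^\eta$ satisfies the linearized functional SPDE
$$\d J^\eta(t)=\{AJ^\eta(t)+\nn_{J^\eta_t}F(X^\xi_t)\}\,\d t+\nn_{J^\eta(t)}\si(X^\xi(t))\,\d W(t),\quad J^\eta_0=\eta.$$
Define
$$h(t):=\si^{-1}(X^\xi(t))\Big\{\ff{Z(t)}{u(t)}1_{[0,T-\tau)}(t)+\nn_{Z_t}F(X^\xi_t)1_{[T-\tau,T]}(t)\Big\}.$$
The moment bound on $Z$ from part (1), the growth $\|\si^{-1}\|\le c(1+\|\cdot\|^q)$, standard $L^p$ estimates for $X^\xi$, and the boundedness of $\nn F$ together ensure $h\in L^2(\OO\times[0,T];H)$. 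The Malliavin derivative $D_hX^\xi(t)$ in the direction $h$ satisfies the same linearized equation as $J^\eta$ but with the additional drift $\si(X^\xi(t))h(t)\,\d t$ and zero initial segment. Setting $Y:=J^\eta-D_hX^\xi$, direct substitution shows that $Y$ obeys exactly \eqref{eq4} with $Y_0=\eta$; hence $Y=Z$ by part (1), so $D_hX^\xi(t)=J^\eta(t)$ on $[T-\tau,T]$ and in particular $D_hX_T^\xi=J^\eta_T$ in $\C$.

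\emph{Conclusion and main difficulty.} The Malliavin chain rule gives $D_hf(X_T^\xi)=\nn f(X_T^\xi)(D_hX_T^\xi)=\nn f(X_T^\xi)(J^\eta_T)=\nn_\eta f(X_T^\xi)$, and the Bismut integration by parts $\E\,D_hG=\E\big(G\int_0^T\<h(t),\d W(t)\>\big)$ for adapted $h\in L^2$ then yields \eqref{eq5} upon substituting the explicit form of $h$. The principal obstacle is part (1): the drift of \eqref{eq4} is singular at $t=T-\tau$, and the hypothesis $\theta_p>0$ is sharp in producing the cancellation identified above. Secondary technical points---the justification of It\^o's formula in the mild-solution framework and the verification of $L^2$-integrability of $h$ near the singularity---both rely essentially on the bound on $\E\|Z(t)\|^p$ from part (1).
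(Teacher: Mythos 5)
Your proposal follows essentially the same route as the paper's proof: the same It\^o computation on $\|Z(t)\|^p/u(t)^{p-1}$ exploiting the cancellation $p+(p-1)u'(t)\ge\theta_p>0$, the same choice of Cameron--Martin direction $\dot h=\sigma^{-1}(X^\xi)\{Z/u\,1_{[0,T-\tau)}+\nabla_{Z_\cdot}F\,1_{[T-\tau,T]}\}$, and the same identification $\nabla_\eta X^\xi-D_hX^\xi=Z$ followed by integration by parts. Two steps need tightening. First, the pointwise-in-$t$ bound $\mathbb{E}\|Z(t)\|^p\le C\,u(t)^{p-1}$ only gives $Z(t)\to0$ in $L^p$, which is not enough to glue $Z\equiv0$ onto $[T-\tau,T]$ as a continuous solution; as in the paper, one should use the Burkholder--Davis--Gundy inequality on the martingale part of $\|Z\|^p/u^{p-1}$ to get $\mathbb{E}\sup_{t\in[0,T-\tau)}\|Z(t)\|^p/u(t)^{p-1}<\infty$, which together with $u(t)\downarrow0$ yields the almost-sure pathwise limit $Z(t)\to0$. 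Second, $Y:=\nabla_\eta X^\xi-D_hX^\xi$ does not literally satisfy \eqref{eq4}: its singular drift term is $-Z(t)/u(t)$, not $-Y(t)/u(t)$, so one cannot invoke uniqueness for \eqref{eq4} directly; the correct deduction (as in the paper) is that $Y-Z$ solves the homogeneous linearized equation with zero initial segment, whence $Y=Z$ by a Gronwall argument. Both fixes are routine and consistent with your overall strategy.
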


A simple choice of $u$ for Theorem \ref{T1.1} is $u(t)=\ff{(T-\tau-t)^+}{T-\tau},$ while for Theorem \ref{T1.2} one may take $u(t)=(T-\tau-t)^+$ such that $\theta_p=1$ for all $p>1.$
Both theorems will be proved in the next section. In Section 3 these results are
  applied to derive
explicit gradient estimates and Harnack inequalities of $P_t$. Finally, for completeness, in the Appendix section we address the existence and
uniqueness of mild solution to equation \eqref{eq1} under
 (A1)-(A4), and the existence of Malliavin derivative $D_hX^\xi(t)$
along direction $h$ and derivative process $\nabla_{\eta}X^\xi(t)$
along direction $\eta$ as solutions of SPDEs on $H$.

\section{Proofs of Theorems \ref{T1.1} and \ref{T1.2}}
For the readers' convenience, let us first explain the main idea of
establishing Bismut formula using Malliavin calculus. Let $H^1_a$ be
the class of all adapted process $h=(h(t))_{t\geq0}$ on $H$ such
that $h(0)=0$,
\begin{equation*}
\dot{h}(t):=\frac{\d}{\d t}h(t)
\end{equation*}
exists $\mathbb{P}\times \d t$-a.e. and
\begin{equation*}
\mathbb{E}\int_0^T\|\dot{h}(t)\|^2\d t<\infty,\ \ \ \ T>0.
\end{equation*}
For $\epsilon>0$ and $h\in H^1_a$, let $X^{\xi,\epsilon h}(t)$ solve
\eqref{eq1} with $W(t)$  replaced by $W(t)+\epsilon h(t)$, i.e.,
\begin{equation}\label{eq6}
\begin{cases}
\d X^{\xi,\epsilon h}(t)=\{AX^{\xi,\epsilon h}(t)+F(X^{\xi,\epsilon h}_t)+\epsilon\sigma(X^{\xi,\epsilon h}(t))\dot{h}(t)\}\d t\\
\ \ \ \ \ \ \ \ \ \ \ \ \ \ \ \ \ +\sigma(X^{\xi,\epsilon h}(t))\d W(t),\\
X_0^{\xi,\epsilon h}=\xi\in\mathscr{C}.
\end{cases}
\end{equation}
If for $h\in H^1_a$
\begin{equation*}
D_hX_t^\xi:=\frac{\d}{\d\epsilon}X_t^{\xi,\epsilon
h}\Big|_{\epsilon=0}
\end{equation*}
exists in $L^2(\Omega\rightarrow H;\mathbb{P})$, we call it the
Malliavin derivative of $X_t^\xi$ along direction $h$. Next, let
\begin{equation*}
\nabla_\eta X_t^\xi:=\frac{\d}{\d\epsilon}X_t^{\xi+\epsilon\eta
}\Big|_{\epsilon=0}
\end{equation*}
be the derivative process of $X_t^\xi$ along direction
$\eta\in\mathscr{C}$. If
\begin{equation}\label{eq7}
D_hX_T^\xi=\nabla_\eta X_T^\xi, \ \ \mbox{ a.s., }
\end{equation}
then for any $f\in C_b^1(\mathscr{C})$
\begin{equation*}
\begin{split}
\nabla_\eta P_Tf(\xi)&=\mathbb{E}\nabla_\eta f(X_T^\xi)
=\mathbb{E}\nabla_{\nabla_\eta X_T^\xi}f(X_T^\xi)\\
&=\mathbb{E}\nabla_{D_hX_T^\xi}f(X_T^\xi)=\mathbb{E}D_hf(X_T^\xi).
\end{split}
\end{equation*}
Combining this with the integration by parts formula for $D_h$, we
obtain
\begin{equation*}
\nabla_\eta P_Tf(\xi)=\mathbb{E}\Big(f(X_T^\xi)\int_0^T\langle
\dot{h}(t),\d W(t)\rangle\Big).
\end{equation*}
In conclusion, the key point of the proof is, for given $T>\tau$,
$\xi,\eta\in\mathscr{C}$ and $f\in C_b^1(\mathscr{C})$, to construct
an $h\in H^1_a$ such that \eqref{eq7} holds.

We are now in a position to complete the proofs of Theorems
\ref{T1.1} and \ref{T1.2}.

\beg{proof}[Proof of Theorem \ref{T1.1}] Let $h(0)=0$ and
\begin{equation*}
\dot{h}(t) =\sigma^{-1}\big\{\nabla_{\Upsilon_t}F(X_t^\xi)-\dot{u}(t)\e^{tA}\eta(0)\big\},\
\ \ t\geq0.
\end{equation*}
By $(A1)$ and $u\in C^1([0,T-\tau])$, we see that $h\in H^1_a$.
Moreover, $\Upsilon(t)$ solves the equation
\begin{equation}\label{eq8}
\begin{cases}
\d\Upsilon(t)=\{A\Upsilon(t)+\nabla_{\Upsilon_t}F(X_t^\xi)-\sigma \dot{h}(t)\}\d t, \ \ t\geq0,\\
\Upsilon_0=\eta.
\end{cases}
\end{equation}
On the other hand, by Theorem A.2 in Appendix, when
$\nabla\sigma=0$, $\nabla_\eta X^\xi(t)-D_hX^\xi(t)$ also solves
this equation. Since it is trivial that \eqref{eq8} has a unique
solution, we conclude that
\begin{equation*}
\nabla_\eta X^\xi(t)-D_hX^\xi(t)=\Upsilon(t),\ \ \  t\geq0.
\end{equation*}
Thus, $\nabla_\eta X^\xi_T=D_hX^\xi_T$ as $\Upsilon_T=0$ according to
the choice of $u$. Therefore, the desired derivative formula holds
as explained above.\end{proof}

To prove Theorem \ref{T1.2}, we need the following lemma.   Since $\nn_\cdot F (X_t^\xi): \C\to H$  and $\nn_\cdot\si(X^\xi(t)): H\to \L_{HS}(H)$ are linear and bounded,
$(\ref{eq4})$ has a unique  strong (variational) solution for $t\in [0,T-\tau).$

\begin{lem}\label{LB}In the situation of Theorem $\ref{T1.2}$, let   $(Z(t))_{t\in [0,T-\tau)}$ solve $(\ref{eq4})$. Then for any $p>0$ there  exists a constant $C>0$ such that
$$\sup_{t\in [0,T-\tau)} \|Z_t\|_\infty^p < C \|\eta\|_\infty^p,\ \ \eta\in \C.$$
 \end{lem}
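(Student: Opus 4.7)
The plan is to derive a Gronwall-type estimate for $\E\sup_{r\le t}\|Z(r)\|^p$ by applying It\^o's formula to $\|Z(t)\|^p$. Since $1/u(t)$ blows up only at $t=T-\tau$, I first work on the sub-interval $[0,T-\tau-\vv]$ for an arbitrarily small $\vv>0$, where the coefficients of \eqref{eq4} are bounded, and then let $\vv\downarrow 0$. The case $p\ge 2$ is treated directly, and the range $0<p<2$ follows by Jensen's inequality.

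Applying It\^o's formula to $\|Z(t)\|^p$ (justified for the mild solution via the Yosida approximation $A_n=nA(n-A)^{-1}$ and a standard passage to the limit) and using the dissipativity $\langle Z,AZ\rangle\le 0$ from (A1), the bound $\|\nn_{Z_t}F(X_t^\xi)\|\le\|\nn F\|_\infty\|Z_t\|_\infty$ from (A2), and $\|\nn_{Z(t)}\si(X^\xi(t))\|_{HS}\le\|\nn\si\|_\infty\|Z(t)\|$ from (A3), one obtains
\begin{equation*}
\d\|Z(t)\|^p \le -\ff{p}{u(t)}\|Z(t)\|^p\,\d t + C_1\bigl(\|Z(t)\|^p + \|Z(t)\|^{p-1}\|Z_t\|_\infty\bigr)\,\d t + \d M(t),
\end{equation*}
where $M$ is a local martingale satisfying $\d\langle M\rangle_t\le C\|Z(t)\|^{2p}\,\d t$. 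Young's inequality yields $\|Z\|^{p-1}\|Z_t\|_\infty \le \|Z\|^p + C'\|Z_t\|_\infty^p$, the singular term $-\ff{p}{u}\|Z\|^p\,\d t$ is non-positive and can be discarded, and the Burkholder--Davis--Gundy inequality combined with an absorption-type Young estimate (to transfer $\frac12\E\sup\|Z\|^p$ onto the left-hand side) produces
\begin{equation*}
\E\sup_{r\le t}\|Z(r)\|^p \le C_2\bigg(\|\eta(0)\|^p + \int_0^t \E\|Z_s\|_\infty^p\,\d s + \int_0^t \E\sup_{r\le s}\|Z(r)\|^p\,\d s\bigg).
\end{equation*}

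Finally, the estimate $\E\|Z_s\|_\infty^p\le 2^{p-1}(\|\eta\|_\infty^p + \E\sup_{r\le s}\|Z(r)\|^p)$ combined with Gronwall's lemma yields $\E\sup_{r\le T-\tau-\vv}\|Z(r)\|^p \le C\|\eta\|_\infty^p$ uniformly in $\vv$, and letting $\vv\downarrow 0$ (together with monotone convergence) gives the claim. The main technical subtlety is the coexistence of the singular drift $-Z/u$ and the delayed term $\nn_{Z_t}F(X_t^\xi)$, which forces me to track $\E\sup_{r\le s}\|Z(r)\|^p$ rather than $\E\|Z(t)\|^p$ in order to control $\|Z_s\|_\infty$, and to use the $\vv$-stopping to stay away from the singularity at $T-\tau$. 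It is worth noting that the singular drift actually contributes a non-positive term to the It\^o expansion of $\|Z\|^p$ and therefore works in our favor here, so the hypothesis $\theta_p>0$ is not exploited in this $L^p$ boundedness estimate per se; rather, it will become decisive later when verifying that the process $\dot h$ built from $Z$ in the Malliavin-calculus argument leading to \eqref{eq5} belongs to $H^1_a$.
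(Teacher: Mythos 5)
Your proof is correct and follows essentially the same route as the paper: apply It\^o's formula to $\|Z(t)\|^p$, use the dissipativity of $A$ and the boundedness of $\nabla F$ and $\nabla\sigma$ to reduce to a drift bounded by $C\|Z_t\|_\infty^p$ plus a non-positive singular term, then combine the Burkholder--Davis--Gundy inequality with Gronwall's lemma applied to $\mathbb{E}\sup_{s\le t}\|Z(s)\|^p$. The only (inessential) differences are that the paper passes through $\|Z\|^2$ first and treats $p>2$ via $(\|Z\|^2)^{p/2}$, while you add the $\varepsilon$-localization away from $t=T-\tau$ and correctly observe that $\theta_p>0$ is not needed at this stage.
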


\begin{proof} It suffices to prove for $p>2.$  By It\^o's formula and the boundedness of $\nn F$ and $\nn\si$, there exists a constant $c_1>0$ such that
\beg{equation*}\beg{split} \d \|Z(t)\|^2 &= \Big\{2\Big\<Z(t), AZ(t)+\nn_{Z_t}F(X_t^\xi)-\ff{Z(t)}{u(t)} \Big\> +\|\nn_{Z(t)}\si(X^\xi(t))\|_{HS}^2\Big\}\d t\\
 &\qquad + 2 \<Z(t), (\nn_{Z(t)}\si(X^\xi(t))\d W(t)\>\\
 &\le \Big\{c_1\|Z_t\|_\infty^2-\ff{2\|Z(t)\|^2}{u(t)}1_{[0,T-\tau)}(t)\Big\}\d t + 2 \<Z(t), (\nn_{Z(t)}\si(X^\xi(t))\d W(t)\> \end{split}\end{equation*} holds for $t\in [0,T-\tau).$
So, for $p>2$ there exists a constant $c_2>0$ such that
\beq\label{AC1} \beg{split}&\d \|Z(t)\|^p  = \d(\|Z(t)\|^2)^{\ff p 2} \\
&= \Big\{\ff p 2 \|Z(t)\|^{p-2}\d\|Z(t)\|^2 +\ff p 2 (p-2) \|Z(t)\|^{p-4}\big\|\big(\nn_{Z(t)}\si(X^\xi(t))\big)^*Z(t)\big\|^2\Big\}\d t\\
&\qquad\qquad +p\|Z(t)\|^{p-2}\big\<Z(t), \big(\nn_{Z(t)}\si(X^\xi(t))\big)\d W(t)\big\>\\
&\le \Big\{c_2 \|Z_t\|_\infty^p -\ff{p\|Z(t)\|^p}{u(t)}\Big\}\d t +p \|Z(t)\|^{p-2} \big\<Z(t), \big(\nn_{Z(t)}\si(X^\xi(t))\big)\d W(t)\big\>\end{split}\end{equation} holds for $t\in [0,T-\tau).$ Since $\|\nn_{Z(t)}\si(X^\xi(t))\|_{HS}\le c\|Z(t)\|$ holds for some constant $c>0$, combining this with the Burkhold-Davis-Gundy inequality,
we arrive at
$$\E\sup_{s\in [-\tau, t]}\|Z(s)\|^p\le \|\eta\|_\infty^p +c_3\int_0^t \E \sup_{s\in [-\tau,\theta]}\|Z(s)\|^p\d s,\ \ t\in [0,T-\tau)$$ for some constant $c_3>0.$ The proof is then completed by the Gronwall lemma. \end{proof}

\beg{proof}[Proof of Theorem \ref{T1.2}]  (1) Due to $(A1)-(A4)$, it is
easy to see that \eqref{eq4} has a unique solution for
$t\in[0,T-\tau)$. Let
\begin{equation*}
\tilde{Z}(t)=Z(t)1_{[-\tau,T-\tau)}(t), \ \ \ \ \ t\geq-\tau.
\end{equation*}
If
\begin{equation}\label{eq9}
\lim\limits_{t\uparrow T-\tau}Z(t)=0,
\end{equation}
then it is easy to see that $(\tilde{Z}(t))_{t\geq0}$ solves
\eqref{eq4} and hence, the proof is finished.  By It\^o's formula and (\ref{AC1})  we can deduce that
\begin{equation*}
\begin{split}
\d\frac{\|Z(t)\|^p}{u^{p-1}(t)} &=\frac{1}{u^{p-1}(t)}\d\|Z(t)\|^p-(p-1)\frac{\dot{u}(t)\|Z(t)\|^p}{u^p(t)}\d t\\
&\leq-\theta_p\frac{\|Z(t)\|^p}{u^p(t)}\d t+C_1\|Z(t)\|^p_\infty\d t\\
&\quad+\frac{ p}{u^{p-1}(t)}\|Z(t)\|^{p-2} \langle
Z(t),(\nabla_{Z(t)} \sigma(X^\xi(t))) \d W(t)\rangle
\end{split}
\end{equation*}
for some constant $C_1>0$. Combining this with Lemma \ref{LB} we obtain
\begin{equation}\label{eq21}
\mathbb{E}\int_0^{T-\tau}\frac{\|Z(t)\|^p}{u^p(t)}\d t\leq
C_2\Big(\|\eta\|_\infty^p +\ff{\|\eta(0)\|^p}{u^{p-1}(0)}\Big)
\end{equation}
for some constant $C_2>0$, and due to the Burkhold-Davis-Gundy inequality $$
\mathbb{E}\sup_{s\in[0,T-\tau)}\frac{\|Z(s)\|^p}{u^{p-1}(s)}<\infty.$$
Since $u(s)\downarrow 0$ as $s\uparrow T-\tau$, the latter   implies (\ref{eq9}).

  (2) Let
\begin{equation*}
\begin{split}
h(t)=\int_0^t\sigma^{-1}(X^\xi(s))\Big\{\frac{Z(s)}{u(s)}1_{[0,T-\tau)}(s)+\nabla_{Z_s}F(X_s^\xi)1_{[T-\tau,T]}(s)\Big\}\d
s, \ \ t\geq0.
\end{split}
\end{equation*}
We first prove that $h\in
H_a^1$.
According to the boundedness of $\|\nabla F\|$ and  using the H\"older  inequality,
we obtain
\begin{equation*}
\begin{split}
&\mathbb{E}\int_0^T\|\dot{h}(t)\|^2\d t
 \leq\mathbb{E}\int_0^{T-\tau}\|\sigma^{-1}(X^\xi(t))\|^2\frac{\|Z(t)\|^2}{u^2(t)}\d
t+C\mathbb{E}\int_{T-\tau}^T\|\sigma^{-1}(X^\xi(t))\|^2
\|Z_s\|^2_\infty \d t\\
&\leq\Big(\mathbb{E}
\int_0^T\|\sigma^{-1}(X^\xi(t))\|^{\frac{2p}{p-2}}\d t\Big)^{\frac{p-2}{p}}
 \times\bigg\{
\Big(\mathbb{E}\int_0^{T-\tau}\frac{\|Z(t)\|^p}{u^p(t)}\d
t\Big)^{\frac{2}{p}}+C
\Big(\mathbb{E}\int^T_{T-\tau}\|Z_t\|^p_\infty \d
t\Big)^{\frac{2}{p}}\bigg\}
\end{split}
\end{equation*}
for some constant $C>0$. Combining this with (\ref{eq21}), $\|\si^{-1}(x)\|\le c(1+\|x\|^q)$, Lemma \ref{LB} and Theorem A.1 below,
we conclude that $\E\int_0^T\|\dot h(t)\|^2\d t<\infty$; that is, $h\in H_a^1.$

Next, we intend to show that $\nn_h X_T^\xi= D_h X_T^\xi,$ which implies the desired derivative
formula as explained in the beginning of this section. It is easy to see from Theorem A.2 below and the definition of $h$  that
$\Gamma(t):=\nabla_\eta X^\xi(t)-D_hX^\xi(t)$ solves the equation
\begin{equation*}
\begin{cases}
\d\Gamma(t)=\Big\{A\Gamma(t)+\nabla_{\Gamma_t}F(X_t^\xi)-\frac{Z(t)}{u(t)}1_{[0,T-\tau)}(t)-\nabla_{Z_t}F(X_t^\xi)1_{[T-\tau,T]}(t)\Big\}\d t\\
\ \ \ \ \ \ \ \ \ \ \quad+\nabla_{\Gamma(t)}\sigma(X^\xi(t))\d W(t),\ \ t\in [0,T]\\
 \Gamma_0=\eta.
\end{cases}
\end{equation*}
Then for $t\in [0,T],$
\begin{equation*}
\begin{cases}
\d(\Gamma(t)-Z(t))=\Big\{A(\Gamma(t)-Z(t))+\nabla_{\Gamma_t-Z_t}F(X_t^\xi)\Big\}\d t+\nabla_{\Gamma(t)-Z(t)}\sigma(X^\xi(t))\d W(t),\\
\Gamma_0-Z_0=0.
\end{cases}
\end{equation*}
By It\^o's formula and using  (A1)-(A3), we obtain
\begin{equation*}
\d\|\Gamma(t)-Z(t)\|^2\leq C\|\Gamma_t-Z_t\|^2_\infty \d
t+2\langle\Gamma(t)-Z(t),\nabla_{\Gamma(t)-Z(t)}\sigma(X^\xi(t))\d
W(t)\rangle
\end{equation*}
for some constant $C>0$ and all $t\in [0,T]$. By the boundedness of
$\|\nabla\sigma\|_{HS}$ and applying the Burkhold-Davis-Gundy
inequality, we obtain
\begin{equation*}
\mathbb{E}\sup_{s\in[0,t]}\|\Gamma(s)-Z(s)\|^2\leq
C'\int_0^t\mathbb{E}\sup_{s\in[0,r]}\|\Gamma(s)-Z(s)\|^2\d r, \ \
t\in [0,T]
\end{equation*}
for some constant $C'>0$. Therefore $\Gamma(t)=Z(t)$  for all
$t\in [0,T]$. In particular, $\Gamma_T=Z_T$. Since $Z_T=0$, we obtain    $\nabla_\eta
X_T^\xi=D_hX_T^\xi$.\end{proof}
\paragraph{Remark 2.1.}
 Our main results, Theorem \ref{T1.1} and Theorem \ref{T1.2},
are established under the assumption that the infinitesimal
generator $A$  generates a contractive $C_0$-semigroup. Replacing $A$ and $F(x)$ by $A-\aa$ and $F(x)+\aa x$ for a positive constant $\aa>0$, they also work for   $A$ generating a
pesudo-contractive $C_0$-semigroup, i.e., $\|\e^{tA}\|\leq \e^{\alpha
t}$.

\section{Gradient Estimate  and Harnack Inequality}
In this section we give some applications of Bismut formulae for
$P_t$ with additive and multiplicative noise respectively.
\begin{thm}[Additive Noise]\label{T3.1} Assume that $(A1)-(A4)$ hold with
constant $\sigma\in\scr {L}(H)$.
  Then there exists a constant $C>0$ such that

$(1)$   For any $T>\tau,\xi,\eta\in \mathscr{C}$
and $f\in\mathscr{B}_b(\mathscr{C})$,
\begin{equation*}
|\nabla_\eta P_Tf(\xi)|^2\leq \ff{C}{(T-\tau)\land 1}P_Tf^2(\xi).
\end{equation*}

$(2)$   For any $T>\tau,\xi,\eta\in \mathscr{C}$
and positive $f\in\mathscr{B}_b(\mathscr{C})$,
\begin{equation}\label{eq14}
|\nabla_\eta P_Tf(\xi)|\leq\delta\big\{P_T(f\log f)-(P_Tf)\log
P_Tf\big\}(\xi)+\frac{\|\eta\|_\infty^2}{\delta \{(T-\tau)\land 1\} } P_Tf(\xi),\ \delta>0.
\end{equation}
\end{thm}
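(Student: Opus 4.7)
My plan is to derive both estimates directly from the Bismut-type identity \eqref{eq3}, with a careful choice of the auxiliary function $u$, together with Cauchy--Schwarz for (1) and the Donsker--Varadhan entropy inequality for (2). Put $s_0:=(T-\tau)\land 1$ and fix $u\in C^1([0,\8))$ with $u(0)=1$, $u(t)=0$ for $t\ge s_0$, and $|\dot u(t)|\le 2/s_0$ on $[0,s_0]$ (a smoothing of $t\mapsto(s_0-t)^+/s_0$ suffices); since $s_0\le T-\tau$, this $u$ is admissible in Theorem \ref{T1.1}. By (A1) and the definition of $\Upsilon$, $\|\Upsilon(t)\|\le\|\eta\|_\8$ for every $t\ge 0$ and $\Upsilon(t)=0$ for $t\ge s_0$, so $\Upsilon_t\equiv 0$ once $t\ge s_0+\tau$. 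Writing
\[
  \Psi(t):=\sigma^{-1}\bigl(\nn_{\Upsilon_t}F(X_t^\xi)-\dot u(t)\,\e^{tA}\eta(0)\bigr),
\]
boundedness of $\nn F$ in (A2), contractivity of $\e^{tA}$, and invertibility of $\sigma$ produce a purely deterministic bound
\[
  \int_0^T\|\Psi(t)\|^2\,\d t\ \le\ C\|\eta\|_\8^2\Bigl(\ff{1}{s_0}+s_0+\tau\Bigr)\ \le\ \ff{C'\|\eta\|_\8^2}{(T-\tau)\land 1}\ =:K,
\]
since the $\dot u$ part contributes order $1/s_0$ on $[0,s_0]$ and the drift part is supported on $[0,s_0+\tau]$.

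For (1), Cauchy--Schwarz and the It\^o isometry applied to \eqref{eq3} immediately give
\[
  |\nn_\eta P_Tf(\xi)|^2\ \le\ P_Tf^2(\xi)\cdot\E\int_0^T\|\Psi(t)\|^2\,\d t\ \le\ K\cdot P_Tf^2(\xi),
\]
which is the claimed inequality; extension from $f\in C_b^1$ to $f\in\B_b(\C)$ is by a standard monotone-class argument. For (2), set $M_T:=\int_0^T\langle\Psi(t),\d W(t)\rangle$; since $[M]_T\le K$ a.s.\ with $K$ deterministic, the exponential martingale estimate yields $\log\E\e^{\lambda M_T}\le\tfrac12\lambda^2 K$ for every $\lambda\in\R$. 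For strictly positive $f$, set $g:=f(X_T^\xi)/P_Tf(\xi)$, a $\P$-density, and apply the Donsker--Varadhan variational inequality $\E[gY]\le\log\E\e^Y+\E[g\log g]$ with $Y=M_T/\delta$ to obtain
\[
  \E[gM_T]\ \le\ \delta\,\E[g\log g]+\ff{K}{2\delta}.
\]
Multiplying by $P_Tf(\xi)$, using the identity $P_Tf(\xi)\E[g\log g]=P_T(f\log f)(\xi)-(P_Tf)\log P_Tf(\xi)$, replacing $\eta$ by $-\eta$ to pass to $|\nn_\eta P_Tf(\xi)|$, and renaming $\delta$ to absorb the factor $\tfrac12$, gives \eqref{eq14}; nonnegative $f\in\B_b(\C)$ is handled by approximation.

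The only truly delicate point is the choice of $u$: the two competing contributions $\int_0^{s_0}|\dot u|^2\,\d t$ of order $1/s_0$ and $\int_0^{s_0+\tau}\|\nn_{\Upsilon_t}F\|^2\,\d t$ of order $s_0+\tau$ pull in opposite directions, and the cut-off at $s_0=(T-\tau)\land 1$ is precisely what balances them so that $K$ scales as $1/\{(T-\tau)\land 1\}$. The remaining ingredients (Cauchy--Schwarz, It\^o isometry, the exponential martingale bound, and Donsker--Varadhan) are classical.
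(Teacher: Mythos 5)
Your proposal is correct and follows essentially the same route as the paper: the Bismut identity \eqref{eq3} combined with Cauchy--Schwarz and the It\^o isometry for (1), and the variational entropy (Young/Donsker--Varadhan) inequality plus the exponential-martingale bound on $\E\exp(M_T/\delta)$ for (2), using the deterministic bound $\langle M\rangle(T)\le C\|\eta\|_\infty^2/\{(T-\tau)\land 1\}$. The only (immaterial) difference is how the $\land\,1$ is produced: the paper first reduces to $T-\tau\in(0,1]$ via Jensen and the semigroup property and then takes $u(t)=(T-\tau-t)^+/(T-\tau)$, whereas you build the cutoff at $s_0=(T-\tau)\land 1$ directly into the support of $u$ (and your version of (1), retaining the factor $\|\eta\|_\infty^2$, is in fact what the paper's own proof establishes).
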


\begin{proof} By the Jensen inequality and the semigroup property of $P_t$, it suffices to prove for $T-\tau\in (0,1].$ Let $u(t)= \ff{(T-\tau-t)^+}{T-\tau}.$
 By Theorem \ref{T1.1}, the proof is then standard and similar to that of \cite[Theorem 4.2]{gw11}. We include it below for completeness.

 \noindent (1) Note that $\dot{u}(t)=-\frac{1}{T-\tau}$. Due to the definition
of $\Upsilon(t)$ and the boundedness of $\|\nabla F\|$ it follows
that
\begin{equation*}
|\nabla_{\Upsilon_t}F(X_t^\xi)|^2\leq C\|\eta\|_\infty^2
\end{equation*}
for some constant $C>0$. By \eqref{eq3}, H\"older's inequality and
the boundedness of $\|\sigma^{-1}\|$ we have
\begin{equation}
\begin{split}
|\nabla_\eta P_Tf(\xi)|^2&\leq
2P_Tf^2(\xi)\mathbb{E}\int_0^{T}\Big\{\|
\sigma^{-1} \nabla_{\Upsilon_t}F(X_t^\xi)\|^2+\frac{1_{[0,T-\tau)}(s)}{(T-\tau)^2}\|\e^{tA}\eta(0))\|^2\Big\}\d t\\
&\leq \ff{C}{ T-\tau }\|\eta\|_\infty^2P_Tf^2(\xi)
\end{split}
\end{equation}for some constant $C>0$ and all $T\in (\tau,\tau+1]$.

\noindent (2) For $t\in[0,T]$, let
\begin{equation*}
M(t):=\int_0^t\Big\langle
\sigma^{-1}\Big(\nabla_{\Upsilon_s}F(X_s^\xi)+\frac{1_{[0,T-\tau)}(s)}{T-\tau}\e^{sA}\eta(0)\Big),\d
W(s)\Big\rangle,
\end{equation*}
which is a mean-square integrable martingale, with quadratic
variation process
\begin{equation*}
\langle M\rangle(t):=\int_0^t\Big\|
\sigma^{-1}\Big(\nabla_{\Upsilon_s}F(X_s^\xi)+\frac{1_{[0,T-\tau)}(s)}{T-\tau}\e^{sA}\eta(0)\Big)\Big\|^2\d
s\leq \ff{C\|\eta\|_\infty^2}{T-\tau},\ \ t\in [0,T]
\end{equation*}
for some constant $C>0$. In the light of \eqref{eq3} and Young's
inequality \cite[Lemma 2.4]{atw09},  we have that for any $\delta>0$
and positive $f\in \mathscr{B}_b(\scr {C})$
\begin{equation*}
|\nabla_\eta P_Tf(\xi)|\leq\delta\big\{P_T(f\log f)-(P_Tf)\log
P_Tf\big\}(\xi)+\delta
P_Tf(\xi)\log\mathbb{E}\exp\Big(\frac{1}{\delta}M(T)\Big).
\end{equation*}
Moreover, by the exponential martingale inequality, the boundedness
of $\|\nn F\|$ and the definition of $\Upsilon_s$,
\begin{equation*}
\begin{split}
\mathbb{E}\exp\Big(\frac{1}{\delta}M(T)\Big) \leq \bigg\{\E\exp\Big(\frac{2}{\delta^2}\<M\>(T)\Big)\bigg\}^{\ff 1 2}
 \leq\exp\Big(\frac{C}{\delta^2(T-\tau) }\|\eta\|_\infty^2\Big)
\end{split}
\end{equation*} holds for some constant $C>0$ and all $T\in (\tau,\tau+1].$  Therefore, the proof is finished.
\end{proof}

According to \cite[Proposition
4.1]{gw11},   \eqref{eq14} implies the following
Harnack inequality. Applications of these inequalities to heat kernel estimates, invariant probability measure and Entropy-cost inequalities can be found in e.g. \cite{RW,W10,W11b}.
\begin{cor}\label{cor3.2} Assume that $(A1)-(A4)$ hold with
constant $\sigma\in\scr {L}(H)$.   Then there exists a constant $C>0$ such that
\begin{equation}\label{eq13}
|P_Tf|^\alpha(\xi)\leq \exp\Big[ \frac{\alpha C\|\eta\|_\infty^2}
 { (\alpha-1)\{(T-\tau)\land 1\} }\Big]P_T|f|^\alpha(\xi+\eta),\ \ f\in\mathscr{B}_b(\mathscr{C}), T>\tau, \xi,\eta\in\mathscr{C}
\end{equation}
holds for any $\alpha>1$.
\end{cor}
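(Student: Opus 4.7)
The plan is to derive \eqref{eq13} from the entropy-type gradient estimate \eqref{eq14} by the standard Wang interpolation argument along the affine segment $\xi_s:=\xi+s\eta$, $s\in[0,1]$, in $\C$. I would first reduce to the case where $f\in\scr{B}_b(\C)$ is bounded above and below by strictly positive constants, by replacing the original $f$ with $\vv+|f|\land M$ and letting $\vv\da 0$, $M\ua\8$, together with $|P_Tf|\le P_T|f|$. This makes all quantities below finite and differentiable in $s$.

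For such $f$, set $p(s):=1+s(\aa-1)$, so that $p(0)=1$, $p(1)=\aa$ and $p'(s)=\aa-1$, and introduce the interpolating functional
\begin{equation*}
\Phi(s):=\ff{1}{p(s)}\log P_Tf^{p(s)}(\xi_s),\ \ s\in[0,1],
\end{equation*}
which satisfies $\Phi(0)=\log P_Tf(\xi)$ and $\Phi(1)=\ff{1}{\aa}\log P_Tf^\aa(\xi+\eta)$. A direct computation using the chain rule and $\pp_s f^{p(s)}=\ff{p'(s)}{p(s)}f^{p(s)}\log f^{p(s)}$ gives
\begin{equation*}
\Phi'(s)=\ff{p'(s)}{p(s)^2}\cdot\ff{P_T(f^{p(s)}\log f^{p(s)})(\xi_s)-P_Tf^{p(s)}(\xi_s)\log P_Tf^{p(s)}(\xi_s)}{P_Tf^{p(s)}(\xi_s)}+\ff{\nn_\eta P_Tf^{p(s)}(\xi_s)}{p(s)P_Tf^{p(s)}(\xi_s)}.
\end{equation*}

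Next I would apply \eqref{eq14} to the positive bounded function $f^{p(s)}$ with $\dd=\dd_s:=p'(s)/p(s)$, a choice engineered so that the entropy contribution it produces in the lower bound for the second summand above has coefficient exactly equal to (the negative of) that of the first summand, forcing the two to cancel. What remains is the clean ODE inequality
\begin{equation*}
\Phi'(s)\ge -\ff{C\|\eta\|_\infty^2}{p'(s)\{(T-\tau)\land 1\}}=-\ff{C\|\eta\|_\infty^2}{(\aa-1)\{(T-\tau)\land 1\}},\ \ s\in[0,1],
\end{equation*}
with $C$ inherited from \eqref{eq14}. Integrating over $s\in[0,1]$, multiplying by $\aa$, and exponentiating the resulting lower bound for $\Phi(1)-\Phi(0)$ yields \eqref{eq13}.

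The only substantive decision is the choice $\dd_s=p'(s)/p(s)$, which is what forces the linear profile $p(s)=1+s(\aa-1)$: any other $p$ would leave an uncancelled entropy term and destroy the clean ODE. The approximation in the first paragraph and the legitimacy of differentiating under $P_T$ are routine consequences of dominated convergence, so no genuine obstacle is expected; the whole argument is a direct application of the log-Sobolev-to-Harnack scheme as used in \cite[Proposition 4.1]{gw11}, precisely the reference cited just before the statement.
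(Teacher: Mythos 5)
Your argument is correct and is exactly the route the paper takes: the paper proves \eqref{eq13} by simply invoking \cite[Proposition 4.1]{gw11} for the implication ``entropy gradient estimate \eqref{eq14} $\Rightarrow$ Harnack inequality'', and your interpolation along $\xi_s=\xi+s\eta$ with $p(s)=1+s(\alpha-1)$ and $\delta_s=p'(s)/p(s)$ is precisely the content of that cited proposition. Nothing further is needed.
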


Next, we consider the multiplicative noise case. For simplicity we only consider the case where $\|\si^{-1}\|_\infty:=\sup_{x\in H} \|\si^{-1}(x)\|<\infty$.
The case for $\si^{-1}$ having algebraic growth is similar, where the resulting estimate of   $\|\nn P_t f\|$ will be no longer bounded for bounded $f$, but bounded above by a polynomial function of $\|\xi\|_\infty$.

\begin{thm}[Multiplicative Noise]\label{T3.3} Let Assume $(A1)$-$(A4)$ and assume that $\|\si^{-1}\|_\infty<\infty$.   Then for any $p> 1$ there exists a constant $C>0$ such that

\begin{equation*}
|\nabla_\eta P_Tf(\xi) |   \leq \ff {C\|\eta\|_\infty } { 1\land
\ss{T-\tau} }  (P_T|f|^p)^{\ff 1 p} (\xi),\ \ f\in\B_b(\C), T>\tau,
\xi,\eta\in\mathscr{C}.
\end{equation*} In particular, $P_t$ is strong Feller for $t>T-\tau$.
\end{thm}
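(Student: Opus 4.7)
The plan is to apply the Bismut formula \eqref{eq5} of Theorem \ref{T1.2}, absorb $f$ by H\"older's inequality, and then control the $q$-th moment of the stochastic integrand via the BDG inequality together with the a priori bounds on $Z$ supplied by Lemma \ref{LB} and \eqref{eq21}. First I would reduce to $T-\tau\in(0,1]$: for $T>\tau+1$ the Markov property gives $P_Tf=P_{\tau+1}(P_{T-\tau-1}f)$, so applying the target estimate at $T'=\tau+1$ to the test function $g=P_{T-\tau-1}f$ and using Jensen $|g|^p\le P_{T-\tau-1}|f|^p$ yields the claim for the original $T$ with the factor $1\wedge\sqrt{T-\tau}$ collapsing to $1$. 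For the remaining range $T-\tau\in(0,1]$ I would take $u(t)=(T-\tau-t)^+$, which is admissible in Theorem \ref{T1.2} with $\theta_p=1$ for every $p>1$ and satisfies $u(0)=T-\tau$.

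Second, I would prove the estimate for $p\in(1,2]$ only, deducing the range $p>2$ from the Jensen bound $(P_T|f|^2)^{1/2}\le(P_T|f|^p)^{1/p}$. Setting $q=p/(p-1)\in[2,\infty)$ and letting $M_T$ be the stochastic integral appearing on the right-hand side of \eqref{eq5}, H\"older gives
$$|\nn_\eta P_Tf(\xi)|\le (P_T|f|^p)^{1/p}(\xi)\cdot\big(\E|M_T|^q\big)^{1/q}.$$
The hypothesis $\|\si^{-1}\|_\infty<\infty$, the boundedness of $\|\nn F\|$, and the BDG inequality then reduce the task to bounding the $(q/2)$-th moments of $\int_0^{T-\tau}\|Z(t)\|^2/u^2(t)\,\d t$ and of $\int_{T-\tau}^T\|Z_t\|_\infty^2\,\d t$.

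Third, H\"older in time (legitimate since $q\ge2$) together with \eqref{eq21} yields
\begin{align*}
\E\Big(\int_0^{T-\tau}\ff{\|Z(t)\|^2}{u^2(t)}\,\d t\Big)^{q/2}
&\le(T-\tau)^{q/2-1}\E\int_0^{T-\tau}\ff{\|Z(t)\|^q}{u^q(t)}\,\d t\\
&\le C\big[(T-\tau)^{q/2-1}\|\eta\|_\infty^q+(T-\tau)^{-q/2}\|\eta(0)\|^q\big],
\end{align*}
while Lemma \ref{LB} bounds the tail piece by $\tau^{q/2}\E\sup_{s\le T-\tau}\|Z(s)\|^q\le C\|\eta\|_\infty^q$. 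Taking $q$-th roots and recalling $T-\tau\le1$ produces $(\E|M_T|^q)^{1/q}\le C\|\eta\|_\infty/\sqrt{T-\tau}$, which combined with the semigroup reduction above gives the claimed bound $C\|\eta\|_\infty/(1\wedge\sqrt{T-\tau})$. The strong Feller assertion is then immediate: for bounded $f$ the inequality forces $\xi\mapsto P_Tf(\xi)$ to be Lipschitz along every direction $\eta$ (integrate $\nn_\eta P_Tf$ along the segment $s\mapsto \xi+s\eta$), and hence continuous.

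The main obstacle I foresee is the careful bookkeeping of the singularity at the terminal time $t=T-\tau$: the a priori bound \eqref{eq21} carries a weight $u(0)^{-(q-1)}=(T-\tau)^{-(q-1)}$, and one must check that after multiplication by the time-H\"older factor $(T-\tau)^{q/2-1}$ and raising the result to the $1/q$-th power the singular exponent is exactly $1/2$ and, crucially, independent of $p$ (equivalently, of $q$), so that the constant $C$ in the final bound may be chosen uniformly for each fixed $p>1$.
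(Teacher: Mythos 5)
Your proposal is correct and follows essentially the same route as the paper: reduce to $T-\tau\in(0,1]$, take $u(t)=(T-\tau-t)^+$, apply the Bismut formula \eqref{eq5} with H\"older's inequality, and bound the resulting moment of the stochastic integral via the Burkholder--Davis--Gundy inequality together with \eqref{eq21} and Lemma \ref{LB}. Your extra steps (reduction to $p\in(1,2]$ by Jensen and the H\"older-in-time bookkeeping showing the singular exponent is exactly $(T-\tau)^{-1/2}$) merely make explicit what the paper compresses into its final sentence.
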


\begin{proof} It suffices to prove for $T\in (\tau,\tau+1].$   Let $u(t)=(T-\tau-t)^{+}, t\geq0$. We have $\theta_p=1$. Since $\si^{-1}$ is bounded,  for any $p>1$ and $\eta\in \C$, it follows from (\ref{eq5}) that
\beg{equation*}\beg{split}&\ff{ |\nn_\eta P_Tf|^{\ff p {p-1}}(\xi)}{(P_T|f|^p)^{\ff 1{p-1}}(\xi)}\\
 &\le  \E\bigg|\int_0^T \Big\<\si^{-1}(X^\xi(s))\Big\{\ff{Z(s)}{u(s)}1_{[0,T-\tau)}(s)+\nn_{Z_s}F(X_s^\xi)1_{[T-\tau,T]}(s)\Big\}, \d W(s)\Big\>\bigg|^{\ff p{p-1}} \\
&\le C_1 \E\bigg(\int_0^T  \Big( \ff{|Z(t)|^2}{u^2(t)}1_{[0,T-\tau)}(t) +\|Z_t\|_\infty^21_{[T-\tau,T]}(t)\Big) \d t\bigg)^{\ff{p}{2(p-1)}}\end{split} \end{equation*}holds for some constants
$C_1,C_2>0$ and all $T\in (\tau,\tau+1],$ where the second inequality follows from the Burkholder-Davis-Gundy inequality: for any $q>1$ there exists a constant $C_q>0$ such that
$$\E \sup_{t\in [0,T]} |M(t)|^q\le C_q\E\<M\>^{\ff q 2}
(T)$$ holds for any continuous martingale $M(t)$ and $T>0.$ 
Then the proof is completed by combining this with \eqref{eq21} with $u(0)=T-\tau$ and Lemma  \ref{LB}.
\end{proof}

\paragraph{Remark 3.1.}  From Corollary \ref{cor3.2} and \cite[Proposition 4.1]{gw11},
we know that entropy estimation \eqref{eq14} plays a key role in
establishing the Harnack inequality. However, the entropy estimation
 seems to be difficult to obtain for the multiplicative
noise case. Hence we can not adopt the same method as in the additive
noise case to derive the Harnack inequality. In order to establish
the Harnack inequality for the multiplicative noise case, one may
use coupling method as in Wang \cite{w11}, and Wang and Yuan
\cite{wy11}. Since the derivation of the Harnack inequality for
functional SPDEs with multiplicative noise is very similar to that
of \cite{wy11}, we omit it here.

\appendix
\section{Appendix}
In this section we  give two auxiliary lemmas, where one concerns
the existence and uniqueness of solution of equation \eqref{eq1}
under  (A1)-(A4), and the other one discusses not only the existence
of Malliavin directional derivative but also the derivative process
with respect to the initial data. To make the content
self-contained, we sketch their proofs.

\begin{thm} Let $(A1), (A4)$ hold, and let $F: H\to H, \si: H\to \L(H)$ be Lipschitz continuous.
Then for any $p>2$ and initial data
$\xi\in L^p(\OO\to \C,\F_0,\P)$, equation \eqref{eq1} has a unique mild solution
$(X^\xi(t))_{t\geq0}$, and the solution satisfies
$$\E \sup_{t\in [0,T]} \|X_t^\xi\|_\infty^p<\infty, \ \ T>0.$$
\end{thm}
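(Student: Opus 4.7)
The plan is to construct the mild solution by a Banach fixed-point argument on a small time interval and then patch together. For fixed $T>0$, work on the complete metric space
\[\mathcal{X}_T:=\{X:[-\tau,T]\times\OO\to H\text{ continuous, adapted, }X_0=\xi\},\]
equipped with the norm $\|X\|_{T,p}^p:=\E\sup_{t\in[-\tau,T]}\|X(t)\|^p$, and define $\Phi$ by $\Phi(X)(t)=\xi(t)$ for $t\in[-\tau,0]$ and
\[\Phi(X)(t)=\e^{tA}\xi(0)+\int_0^t\e^{(t-s)A}F(X_s)\d s+\int_0^t\e^{(t-s)A}\si(X(s))\d W(s),\ \ t\in[0,T].\]
The strategy is to show that (i) $\Phi$ maps $\mathcal{X}_T$ into itself and (ii) $\Phi$ is a contraction on $\mathcal{X}_{T_0}$ for some small $T_0>0$; Banach's theorem then yields a unique fixed point on $[0,T_0]$, and iterating on $[kT_0,(k+1)T_0]$ produces the mild solution on $[0,T]$.

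The delicate ingredient is the stochastic convolution
\[\Psi_X(t):=\int_0^t\e^{(t-s)A}\si(X(s))\d W(s),\]
since $\si$ takes values in $\L(H)$, not necessarily in $\L_{HS}(H)$. To establish its existence and continuity I would apply the Da Prato--Kwapie\'n--Zabczyk factorization method: fix $\aa\in(0,\ff12)$ from (A4) and write
\[\Psi_X(t)=\ff{\sin\pi\aa}{\pi}\int_0^t(t-s)^{\aa-1}\e^{(t-s)A}Y_X(s)\d s,\ \ Y_X(s):=\int_0^s(s-r)^{-\aa}\e^{(s-r)A}\si(X(r))\d W(r).\]
Using (A4), the Lipschitz bound $\|\si(x)-\si(0)\|\le L\|x\|$, and the contractivity of $\e^{tA}$, together with It\^o's isometry and the Burkholder--Davis--Gundy inequality, one obtains $\E\|Y_X(s)\|^p\le C(1+\|X\|_{T,p}^p)$ for any $p$ with $p\aa>1$. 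H\"older's inequality applied to the factorization formula then yields continuity of $\Psi_X$ together with the estimate
\[\E\sup_{t\in[0,T]}\|\Psi_X(t)\|^p\le C_T\Big(1+\int_0^T\E\sup_{s\in[-\tau,r]}\|X(s)\|^p\d r\Big).\]

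Combining this with Lipschitz continuity of $F$ and $\si$ and the contractivity of $\e^{tA}$ gives
\[\E\sup_{s\in[-\tau,t]}\|\Phi(X)(s)-\Phi(\tilde X)(s)\|^p\le C\int_0^t\E\sup_{r\in[-\tau,s]}\|X(r)-\tilde X(r)\|^p\d s,\ \ t\in[0,T],\]
so that for $T_0$ small enough $\Phi$ is a strict contraction in $\|\cdot\|_{T_0,p}$. Patching in the usual way produces a unique mild solution on $[0,T]$, and the a priori bound $\E\sup_{t\in[0,T]}\|X_t^\xi\|_\infty^p<\infty$ follows by taking $\tilde X\equiv\xi(0)$ in the inequality above and applying Gronwall's lemma. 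The statement for an arbitrary $p>2$ is then recovered by first doing the argument for a sufficiently large $p'>1/\aa$ and invoking Jensen's inequality.

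The main obstacle is the stochastic convolution: without hypothesis (A4), the integrand $\e^{(t-s)A}\si(X(s))$ need not be Hilbert--Schmidt and the stochastic integral is not even defined in $H$ in the usual sense. Assumption (A4) is tailored precisely so that the factorization auxiliary process $Y_X$ is well defined and $L^p$-integrable for sufficiently large $p$; this is the one step that goes beyond the purely Lipschitz structure of the coefficients and closes the fixed-point scheme.
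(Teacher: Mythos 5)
Your proposal is correct and follows essentially the same route as the paper: a Banach fixed-point argument on the space of continuous adapted processes with the norm $\big(\E\sup_{t\in[-\tau,T]}\|X(t)\|^p\big)^{1/p}$, with the crux being a maximal $L^p$-inequality for the stochastic convolution under (A4). The only difference is that you derive that inequality by carrying out the Da Prato--Kwapie\'n--Zabczyk factorization explicitly (after enlarging $p$ so that $p\alpha>1$), whereas the paper shrinks $\alpha$ and simply cites Proposition 7.9 of Da Prato--Zabczyk, which is proved by the same factorization method.
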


\begin{proof} Obviously, (A4) remains true by replacing $\aa$ with a smaller positive number. So, we may take in  (A4)   $\aa\in (0, \ff 1 p).$ Then, by \cite[Proposition 7.9]{DZ} for $r=\ff p 2\in (1,\ff 1 {2\aa}),$ for any $T_0>0$ there exists a constant $C_0>0$ such that for any continuous adapted process $Y(s)$ on $H$,
\beq\label{BB1} \E\sup_{t\in [0,T]} \bigg|\int_0^t\e^{(t-s)A}\si(Y(s))\d W(s)\bigg|^p\le C_0 \E\int_0^T\|\si(Y(s)\|^p\d s,\ \ T\in [0,T_0].\end{equation}
Using this inequality,   the desired assertions follow  from   the classical
fixed point theorem for contractions. Denote by $\mathscr{H}_p$ the
Banach space of all the $H$-valued  continuous adapted processes $Y$ defined
on the time interval $[-\tau,T]$ such that
$Y(t)=\xi(t),t\in[-\tau,0]$, and
\begin{equation*}
\|Y\|_p:=\Big(\mathbb{E}\sup_{t\in[-\tau,T]}\|Y(t)\|^p\Big)^{\frac{1}{p}}<\infty.
\end{equation*}
Let
$$
\mathscr{K}(Y)(t)=\beg{cases} \xi(t), &\text{if}\ t\in [-\tau,0],\\
\e^{tA}\xi(0)+\int_0^t\e^{(t-s)A}F(Y_s)\d
s+\int_0^t\e^{(t-s)A}\sigma(Y(s))\d W(s), &\text{if}\  t\in (0,T].\end{cases}
$$
By (\ref{BB1}) and the linear growth of $F$ and $\si$, we conclude that   $\mathscr{K}$ maps $\mathscr{H}_p$ into
$\mathscr{H}_p$. For the existence and uniqueness of solutions, it suffices to show that the map $\mathscr{K}$ is contractive for small $T>0$.
  By the Lipschitz continuity of $F$ and $\si$, and applying (\ref{BB1}) for $\si(Y_1(s))-\si(Y_2(s))$ in place of $\si(Y(s))$,
we obtain  $$
\|\mathscr{K}(Y^1)-\mathscr{K}(Y^2)\|_p \leq CT\|Y^1-Y^2\|^p_p,\ \ Y^1,Y^2\in \scr H_p.
$$
for some constant $C>0$ and all $T\in [0,T_0].$  Choosing sufficiently small $T$ such that $CT<1$ we
can conclude that $\scr K$ is contractive.
\end{proof}

\begin{thm}
Assume that $(A1)$, $(A2)$ and $(A3)$
hold, and let $\xi,\eta\in\mathscr{C}$ and $h\in H^1_a$.
\begin{enumerate}
\item[\textmd{(1)}] $(D_hX(t))_{t\geq0}$ exists and is the
unique solution to the equation
\begin{equation*}
\begin{cases}
\d\alpha(t)=\{A\alpha(t)+\nabla_{\alpha_t}F(X^\xi_t)+\sigma(X^\xi(t))\dot{h}(t)\}\d t\\
\ \ \ \ \ \ \ \ \ \ \ \
+(\nabla_{\alpha(t)}\sigma(X^\xi(t)))\d W(t),\\
\alpha_0=0.
\end{cases}
\end{equation*}
\item[\textmd{(2)}]$ (\nabla_\eta X(t))_{t\geq0}$ exists and is the
unique solution to the equation
\begin{equation*}
\begin{cases}
\d\beta(t)=\{A\beta(t)+\nabla_{\beta_t}F(X^\xi_t)\}\d t+(\nabla_{\beta(t)}\sigma(X^\xi(t)))\d W(t),\\
\beta_0=\eta.
\end{cases}
\end{equation*}
\end{enumerate}
\end{thm}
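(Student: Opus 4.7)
The plan is to handle both assertions in parallel: first show each of the two linear SPDEs has a unique mild solution via the same fixed-point scheme as Theorem A.1, and then identify these solutions with the $L^p$-limits of suitable difference quotients.

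For well-posedness, by $(A2)$ and $(A3)$ the linear maps $\zeta\mapsto\nabla_{\zeta_t}F(X_t^\xi)$ and $v\mapsto\nabla_v\sigma(X^\xi(t))$ have operator norms bounded uniformly in $(t,\omega)$. In part (1) the forcing term $\sigma(X^\xi(t))\dot h(t)$ lies in $L^2(\Omega\times[0,T];H)$ because $h\in H^1_a$, $\sigma$ has linear growth, and $X^\xi$ has finite $L^p$ norms by Theorem A.1. Consequently the mild-equation contraction argument of Theorem A.1 applies on short time intervals and can be iterated, yielding unique mild solutions $\alpha$ and $\beta$ with finite $L^p$ moments for every $p\ge 2$.

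For part (1), set $\Delta^\epsilon(t):=\epsilon^{-1}(X^{\xi,\epsilon h}(t)-X^\xi(t))-\alpha(t)$ and subtract the mild form of \eqref{eq1} from that of \eqref{eq6}, then compare with the $\alpha$-equation. A first-order Taylor expansion of $F$ and $\sigma$ along the segment process gives
\begin{equation*}
\Delta^\epsilon(t)=\int_0^t e^{(t-s)A}\bigl\{\nabla_{\Delta^\epsilon_s}F(X_s^\xi)+r^{\epsilon,1}(s)\bigr\}ds+\int_0^t e^{(t-s)A}\bigl\{\nabla_{\Delta^\epsilon(s)}\sigma(X^\xi(s))+r^{\epsilon,2}(s)\bigr\}dW(s)+\int_0^t e^{(t-s)A}r^{\epsilon,3}(s)ds,
\end{equation*}
where $r^{\epsilon,1}, r^{\epsilon,2}$ are the Taylor remainders of $F$ and $\sigma$ at $X^\xi$ evaluated along the chord to $X^{\xi,\epsilon h}$, and $r^{\epsilon,3}(s):=(\sigma(X^{\xi,\epsilon h}(s))-\sigma(X^\xi(s)))\dot h(s)$. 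A stand-alone Gronwall estimate gives the preliminary bound $\mathbb{E}\sup_{t\le T}\|X^{\xi,\epsilon h}(t)-X^\xi(t)\|^p\lesssim\epsilon^p$, which combined with the uniform continuity of $\nabla F,\nabla\sigma$ on bounded sets (and uniform-in-$\epsilon$ $L^p$ bounds on $X^{\xi,\epsilon h}$) forces the three remainders to $0$ in $L^p(\Omega;C([0,T];H))$. Burkholder--Davis--Gundy and Gronwall applied to $\mathbb{E}\sup_{s\le t}\|\Delta^\epsilon(s)\|^p$ then yield $\Delta^\epsilon\to 0$, so $D_h X(t)=\alpha(t)$ as claimed.

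Part (2) is analogous: replace the noise perturbation $W+\epsilon h$ by the initial perturbation $\xi+\epsilon\eta$, define $\widetilde\Delta^\epsilon(t):=\epsilon^{-1}(X^{\xi+\epsilon\eta}(t)-X^\xi(t))-\beta(t)$, and repeat the argument verbatim; the $r^{\epsilon,3}$ term disappears and the initial segment of $\widetilde\Delta^\epsilon$ is identically zero. The main obstacle throughout is the $L^p$ vanishing of the Taylor remainders $r^{\epsilon,1}, r^{\epsilon,2}$: pointwise smallness in $\omega$ is immediate from Fr\'echet differentiability, but $L^p$ smallness requires precisely the \emph{uniform} continuity of $\nabla F$ and $\nabla\sigma$ on bounded sets assumed in $(A2)$--$(A3)$, so that the remainder can be estimated by a deterministic modulus evaluated at the $L^p$-controlled quantity $\|X^{\xi,\epsilon h}-X^\xi\|_\infty$. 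Uniqueness of $\alpha$ and $\beta$ finally identifies the limits.
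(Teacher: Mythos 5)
Your proposal is correct and follows essentially the same route as the paper: both define the error process $X^{\xi,\epsilon h}-X^{\xi}-\epsilon\alpha$, first establish the preliminary bound $\mathbb{E}\sup_{t\le T}\|X^{\xi,\epsilon h}_t-X^\xi_t\|_\infty^2\lesssim\epsilon^2$, and then kill the Taylor remainders of $F$ and $\sigma$ using the uniform continuity of $\nabla F,\nabla\sigma$ on bounded sets together with Burkholder--Davis--Gundy and Gronwall. The only detail you leave implicit is how the ``bounded set'' is enforced: the paper localizes with the stopping times $\tau_n=\inf\{t:\|X_t^\xi\|_\infty\ge n\}$ and uses the elementary bound $s\gamma_n(s)\le\gamma_n(\sqrt{\epsilon})s+s^2\gamma_n(\infty)/\sqrt{\epsilon}$ to convert the modulus-of-continuity estimate into an $o(\epsilon)$ rate, but this is a technical refinement of the same argument rather than a different approach.
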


\begin{proof}
We only prove (1) since   (2) can be proved in a   similar way. The argument of
the proof is standard in the setting of semi-linear SPDEs without
delay. The only difference for the present setting is that one has
to estimate the $\sup$ over time for the norm of the error process
for small $\epsilon\in(0,1)$
\begin{equation*}
\Lambda^{\epsilon}(t):=X^{\xi,\epsilon
h}(t)-X^\xi(t)-\epsilon\alpha(t),\ \ t\geq0,
\end{equation*}
where $X^{\xi,\epsilon h}$ is the mild solution to \eqref{eq6}.

\noindent (a) There exists a constant $C>0$ such that
\begin{equation}\label{w2}
\mathbb{E}\sup_{t\in[0,T]}\|X^{\xi,\epsilon
h}_t-X^\xi_t\|_\infty^2\leq\epsilon^2\e^{C(T+1)}\mathbb{E}\int_0^T\|\dot{h}(t)\|^2\d
t, \ \ T\geq0.
\end{equation}
Indeed, by $(A1)$, $(A2)$ and $(A3)$ we have the following It\^o's
formula for $\|X^{\xi,\epsilon h}(t)-X^\xi(t)\|^2:$
\begin{equation*}
\begin{split}
\d\|X^{\xi,\epsilon h}(t)-X^\xi(t)\|^2&=2\langle X^{\xi,\epsilon
h}(t)-X^\xi(t), A(X^{\xi,\epsilon
h}(t)-X^\xi(t))\\
&\quad+F(X^{\xi,\epsilon
h}_t)-F(X^\xi_t)+\epsilon\sigma(X^{\xi,\epsilon
h}(t))\dot{h}(t)\rangle\d t\\
&\quad+\|\sigma(X^{\xi,\epsilon h}(t))-\sigma(X^\xi(t))\|_{HS}^2\d t\\
&\quad+2\langle X^{\xi,\epsilon
h}(t)-X^\xi(t),(\sigma(X^{\xi,\epsilon h}(t))-\sigma(X^\xi(t)))\d
W(t)\rangle.
\end{split}
\end{equation*}
Noting from $(A1)$, $(A2)$ and $(A3)$ that
\begin{equation*}
\begin{split}
&\langle X^{\xi,\epsilon h}(t)-X^\xi(t), A(X^{\xi,\epsilon
h}(t)-X^\xi(t))\rangle\leq0,\\
&\|F(X^{\xi,\epsilon h}_t)-F(X^\xi_t)+\epsilon\sigma(X^{\xi,\epsilon
h}(t))\dot{h}(t)\|\leq C_1(\|X^{\xi,\epsilon
h}_t-X^\xi_t\|_\infty+\epsilon\|\dot{h}(t)\|),
\end{split}
\end{equation*}
and by the Burkhold-Davis-Gundy inequality
\begin{equation*}
\begin{split}
\mathbb{E}&\sup_{t\in[0,T]}\Big|\int_0^t\langle X^{\xi,\epsilon
h}(s)-X^\xi(s),(\sigma(X^{\xi,\epsilon
h}(s))-\sigma(X^\xi(s)))\d W(s)\rangle\Big|\\
&\leq C_1\mathbb{E}\left(\int_0^T\|X^{\xi,\epsilon
h}(s)-X^\xi(s)\|^4\d s\right)^{\frac{1}{2}}\\
&\leq\frac{1}{2}\mathbb{E}\sup_{t\in[0,T]}\|X^{\xi,\epsilon
h}(t)-X^\xi(t)\|^2+\frac{C_1}{2}\mathbb{E}\int_0^T\|X^{\xi,\epsilon
h}(s)-X^\xi(s)\|^2\d s
\end{split}
\end{equation*}
for some constant $C_1>0$, we obtain
\begin{equation*}
\begin{split}
\mathbb{E}\sup_{t\in[0,T]}\|X^{\xi,\epsilon
h}_t-X^\xi_t\|_\infty^2\leq
C_2\epsilon^2\int_0^T\|\dot{h}(t)\|^2\d
t+C_2\int_0^T\mathbb{E}\sup_{s\in[0,t]}\|X^{\xi,\epsilon
h}_s-X^\xi_s\|_\infty^2\d t
\end{split}
\end{equation*}
for some constant $C_2>0$. This implies \eqref{w2}.

\noindent (b) To prove $D_hX^{\xi}(t)=\alpha(t)$ it suffices to show
\begin{equation}\label{w3}
\lim\limits_{\epsilon\downarrow0}\frac{1}{\epsilon}\mathbb{E}\sup_{t\in[0,T]}\|\Lambda^\epsilon(t\wedge\tau_n)\|=0,
\ \ \ n\geq1,
\end{equation}
where $\tau_n:=\inf\{t\geq0,\|X_t^\xi\|_\infty\geq
n\}\uparrow\infty$ as $n\uparrow\infty$. To this end, we observe
that
\begin{equation}\label{w4}
\begin{split}
\Lambda^\epsilon(t\wedge\tau_n)&=\int_0^{t\wedge\tau_n}\e^{(t-s)A}\{F(X^{\xi,\epsilon
h}_s)-F(X^\xi_s)-\epsilon\nabla_{\alpha_s}F(X^\xi_s)\\
&\quad+\epsilon(\sigma(X^{\xi,\epsilon
h}(s))-\sigma(X^\xi(s)))\dot{h}(s)\}\d s\\
&\quad+\int_0^{t\wedge\tau_n}\e^{(t-s)A}(\sigma(X^{\xi,\epsilon
h}(s))-\sigma(X^\xi(s))-\epsilon\nabla_{\alpha(s)}\sigma(X^\xi(s)))\d
W(s).
\end{split}
\end{equation}
Let
\begin{equation*}
\gamma_n(s):=\sup_{\|\xi\|_\infty\leq n,\|\xi-\eta\|_\infty\leq
s}\|\nabla F(\xi)-\nabla F(\eta)\|_\infty+\sup_{\|x\|\leq
n,\|x-y\|\leq s}\|\nabla \sigma(x)-\nabla \sigma(y)\|_{HS}.
\end{equation*}
By $(A2)$ and $(A3)$ we have $\gamma_n(s)\downarrow0$ as
$s\downarrow0$ and $\gamma_n(\infty)<\infty$. Then
\begin{equation*}
s\gamma_n(s)\leq\gamma_n(\sqrt{\epsilon})s+\frac{s^2\gamma_n(\infty)}{\sqrt{\epsilon}},
\ \ \ \ s\geq0.
\end{equation*}
Therefore, there exists a constant $C_1>0$ such that
\begin{equation*}
\begin{split}
&\|F(X^{\xi,\epsilon
h}_s)-F(X^\xi_s)-\epsilon\nabla_{\alpha_s}F(X^\xi_s)\|_\infty\\
&\leq\|\nabla
F\|_\infty\|\Lambda_s^\epsilon\|_\infty+\|X^{\xi,\epsilon
h}_s-X^\xi_s\|_\infty\gamma_n(\|X^{\xi,\epsilon
h}_s-X^\xi_s\|_\infty)\\
&\leq
C_1\|\Lambda_s^\epsilon\|_\infty+\gamma(\sqrt{\epsilon})\|X^{\xi,\epsilon
h}_s-X^\xi_s\|_\infty+\frac{\gamma_n(\infty)}{\sqrt{\epsilon}}\|X^{\xi,\epsilon
h}_s-X^\xi_s\|_\infty^2,\\
&\epsilon\|(\sigma(X^{\xi,\epsilon
h}(s))-\sigma(X^\xi(s)))\dot{h}(s)\|\leq\epsilon^2\|\dot{h}(s)\|^2+C_1\|X^{\xi,\epsilon
h}(s)-X^\xi(s)\|^2,
\end{split}
\end{equation*}
and by the Burkhold-Davis-Gundy inequality
\begin{equation*}
\begin{split}
&\mathbb{E}\sup_{t\in[0,T]}\Big\|\int_0^{t\wedge\tau_n}\e^{(t-s)A}(\sigma(X^{\xi,\epsilon
h}(s))-\sigma(X^\xi(s))-\epsilon\nabla_{\alpha(s)}\sigma(X^\xi(s)))\d W(s)\Big\|\\
&\leq2\mathbb{E}\Big(\int_0^{T\wedge\tau_n}\|\sigma(X^{\xi,\epsilon
h}(s))-\sigma(X^\xi(s))-\epsilon\nabla_{\alpha(s)}\sigma(X^\xi(s))\|_{HS}^2\d s\Big)^{\frac{1}{2}}\\
&\leq2\mathbb{E}\Big(\int_0^{T\wedge\tau_n}(\|\nabla
\sigma\|\|\Lambda^\epsilon(s)\|+\|X^{\xi,\epsilon
h}(s)-X^\xi(s)\|\gamma_n(\|X^{\xi,\epsilon
h}(s)-X^\xi(s)\|))^2\d s\Big)^{\frac{1}{2}}\\
&\leq\frac{1}{2}\mathbb{E}\sup_{t\in[0,T]}\|\Lambda^\epsilon(t\wedge\tau_n)\|+\gamma_n(\sqrt{\epsilon})\mathbb{E}\sup_{t\in[0,T]}\|X^{\xi,\epsilon
h}(t\wedge\tau_n)-X^\xi(t\wedge\tau_n)\|\\
&\quad+\frac{\gamma_n(\infty)}{\sqrt{\epsilon}}\mathbb{E}\sup_{t\in[0,T]}\|X^{\xi,\epsilon
h}(t\wedge\tau_n)-X^\xi(t\wedge\tau_n)\|^2\\
&\quad+
C_1\mathbb{E}\int_0^{T\wedge\tau_n}\Big(\|\Lambda^\epsilon(s)\|+\gamma_n(\sqrt{\epsilon})\|X^{\xi,\epsilon
h}(s)-X^\xi(s)\|\\
&\quad+\frac{\gamma_n(\infty)}{\sqrt{\epsilon}}\|X^{\xi,\epsilon
h}(s)-X^\xi(s)\|^2\Big)\d s.
\end{split}
\end{equation*}
Combining this with \eqref{w2} and \eqref{w4} we obtain
\begin{equation*}
\begin{split}
\mathbb{E}\sup_{t\in[0,T]}\|\Lambda^\epsilon(t\wedge\tau_n)\|\leq
C_2\int_0^T\mathbb{E}\sup_{s\in[0,t]}\|\Lambda^\epsilon(s\wedge\tau_n)\|
\d
s+C(T)\Big(\gamma_n(\sqrt{\epsilon})\epsilon+\frac{\gamma_n(\infty)\epsilon^2}{\sqrt{\epsilon}}\Big)
\end{split}
\end{equation*}
for some constant $C_2>0$ and
\begin{equation*}
C(T):=\e^{C_2(1+T)}\Big(1+\mathbb{E}\int_0^T\|\dot{h}(t)\|^2\d
t\Big), \ \  T\geq0.
\end{equation*}
Due to the Gronwall inequality, this implies \eqref{w3}.

\end{proof}

\end{document}